\numberwithin{equation}{section}
\newtheorem{introthm*}{Main Result}
\newtheorem{theorem}{Theorem}[section]
\newtheorem{lemma}[theorem]{Lemma}
\newtheorem{proposition}[theorem]{Proposition}
\newtheorem{corollary}[theorem]{Corollary}
\theoremstyle{definition}
\newtheorem{definition}[theorem]{Definition}
\newtheorem{def-prop}[theorem]{Definition-Proposition}
\newtheorem{remark}[theorem]{Remark}
\newtheorem{example}[theorem]{Example}
\newtheorem*{acknowledgment}{Acknowledgments}
\newtheorem{question}[theorem]{Question}
\DeclareMathOperator{\Ass}{Ass}
\DeclareMathOperator{\Min}{Min}
\DeclareMathOperator{\Spec}{Spec}
\newcommand{\CC}{{\mathbb C}}
\newcommand{\ZZ}{{\mathbb Z}}
\newcommand{\NN}{{\mathbb N}}
\newcommand{\QQ}{{\mathbb Q}}
\newcommand{\kk}{{\mathbbm k}}
\def\pp{{\frak p}}
\def\qq{{\frak q}}
\def\1{{\bf 1}}
\def\0{{\bf 0}}
\begin{document}
	
	\title{Symbolic Powers via Extension}
	
	\author{Sankhaneel Bisui}
	\address{SRM University AP \\ Department of Mathematics \\
		Amaravati, AP 522502, India}
	\email{sankhaneel.b@srmap.edu.in, sbisui@tulane.edu} 
	
	\author{Haoxi Hu}
	\address{Tulane University, Department of Mathematics and Statistics,
		6823 St. Charles Avenue
		New Orleans, LA 70118, USA}
	
	\email{hhu5@tulane.edu}

	\keywords{Resurgence, asymptotic resurgence, prime extension, flat, faithfully flat}
	\subjclass[2010]{14N20, 13F20, 14C20}
	
	\begin{abstract}
	This article investigates under which conditions symbolic powers of the extension of an ideal are the same as the extension of the symbolic powers. As an application, we prove formulas for the resurgence of sum of two homogeneous ideals in finitely generated graded $\kk$-algebras which are domains, where $\kk$ is algebraically closed. Initially, these were known for homogeneous ideals in polynomial rings. 
	\end{abstract}
	\maketitle
	\section{Introduction}
For an ideal $I$ in a Noethrian ring $R$, the $m$-th symbolic power is defined as follows: 
$$I^{(m)}=\bigcap_{\pp\in \Ass(I)} I^mR_\pp \cap R .$$ 

The symbolic powers are very well-studied objects for commutative algebra and algebraic geometry, especially for the multiplicity theory. We are interested in the behavior of symbolic powers under ring homomorphisms. If $\phi: A\to B$ is a homomorphism of rings, then for an ideal $I$ in $A$, we use $IB$ to denote the \emph{extension} of the ideal $I$ in $B$, and for an ideal $J$ in $B$, we use $J \cap A$ to denote the \emph{contraction} of the ideal $J$ in $A$. In this article, we primarily focus on the following question. 

\begin{question}\label{question: symbolic power extension}
	Let $A$ and $B$ be Noetherian rings, and $I$ be an ideal of $A$. Let $\phi: A\to B$ be a ring homomorphism. 
	When is $(IB)^{(n)} = I^{(n)}B$, where $m \geqslant 1$? 
\end{question}

If the map $\phi$ is \emph{faithfully flat}, then the Question \ref{question: symbolic power extension} has been well studied, for examples check, \cite[Theorem 3.6]{GHMNmatroid}, \cite[Theorem 4]{Akessesh}, and \cite[Proposition 2.1]{walker2018uniform}. Our result generalizes from \emph{faithfully flat} map to \emph{flat} map, which is the following:

\begin{introthm*} [\Cref{theorem: symbolic powers under flat extension}]
	Let $A$ and $B$ be Noetherian rings  and $\phi: A\to B$ be a flat morphism. Assume that $I\subset A$ is a proper ideal such that $\Ass^*(IB)=\{\pp B~|~ \text{ for all } \pp\in \Ass^*(I) \}$, and every associated prime of $I$ is a contraction of some ideal in $B$. Then $(IB)^{(n)} = I^{(n)}B $, for all $n \geqslant 1$. 
\end{introthm*}  
 We also give examples that satisfy assumptions in Theorem \ref{theorem: symbolic powers under flat extension} but not under the \emph{faithfully flat} map, such as Example \ref{example: flat not faithfully flat 1} and Example \ref{example: flat not faithfully flat 2}. Moreover, In Example \ref{example: contraction condition}, we show that the condition that associated primes are contractions is necessary.

One of the central topics of symbolic powers is the containment between the symbolic and ordinary powers of ideals.  The containment problem asks to find $m$ and $r$ for which $I^{(m)} \subset I^r$. In \cite{EinLazarsfeldSmith, HochsterHuneke, MaScwede}, it has been proved that if $I$ is a proper ideal of big height $h$ (the largest height of associated primes of $I$) in a regular ring containing a field, then $I^{(rh)}\subset I^r$ for all $r \geqslant 1$. After this celebrated result, one can keep investigating exact numbers for containments. The resurgence, defined by Bocci and Harbourne \cite{bocci2010comparing}, and the asymptotic resurgence, defined by Guardo, Harbourne, and Van Tuyl \cite{guardo2013asymptotic} measure containment in more refined settings.  

If $I$ is a proper ideal in a Notherian ring $R$, $\rho(I)$ and the \emph{asymptotic resurgence}, $\rho_a(I)$ are defined as follows: 
$$
\rho(I) =  \sup \left\{ \dfrac{m}{r} ~|~ I^{(m)} \not\subseteq I^r, m\geqslant 1, r\geqslant 1 \right\}, $$ $$ {\rho_a}(I) = \sup \left\{ \frac{m}{r} ~|~ I^{(mt)} \not\subseteq {I^{rt}}, m\geqslant 1, r\geqslant 1, \text{ and } t \gg 1 \right\}. 
$$

Motivated by Question \ref{question: symbolic power extension} and \emph{resurgence numbers},  we study the following question. 
\begin{question}\label{question: resurgence extension}
	Let $A$ and $B$ be Notherian and regular $\kk$-algebras,  $I$ be an ideal of $A$, and $J$ be an ideal of $B$. What can be said about $\rho(I+J)$ and $\rho_a(I+J)$, where $I+J=IR+JR$ and $R=A \otimes_{\kk} B$?   
\end{question}

The resurgence and the asymptotic resurgence of the sum of ideals $I\subset \kk[\underline{x}]$ and $J\subset \kk[\underline{y}]$, where $\kk[\underline{x}]$ and $\kk[\underline{y}]$ are polynomial rings, are studied in \cite{fiberprojective, sharpbound}. 
As an application of \Cref{theorem: symbolic powers under flat extension}, we extend those results ideals in standard graded finitely generated $\kk$-algebras which are domains, and $\kk$ is algebraically closed.
\begin{introthm*} Let $I \subset A $ and $J \subset B$ be homogeneous ideals in finitely generated graded regular $\kk$-algebra domains $A$ and $B$, where $\kk$ is algebraically closed.  Set $I+J=IR+JR$, where $R=A \otimes_{\kk} B$. Then the following hold.  
	\begin{enumerate}
		
		\item (\Cref{theorem: asymptotic resurgence of sum of ideals})  $	\rho_{a}(I + J) = {\max}\{ \rho_a(I),  \rho_a(J) \} =\max\{\rho_a(IR),  \rho_a(JR)  \}$, and also\\
		$\rho_{a}(I + J) = \overline{\rho}_{a}(I + J) = \overline{\rho}(I + J)$, where $\bar{\rho}$ and $\bar{\rho_{\alpha}}$ are resurgence (asymptotic resurgence respectively) of integral closure.

		\item  (\Cref{theorem: resurgence bound of sum of ideals}) $ \displaystyle 
	\max \{ \rho(I), \, \rho(J) \} \leqslant \rho(I + J) \leqslant \max \left\{ \rho(I), \, \rho(J), \, \frac{2 (\rho(I) + \rho(J))}{3} \right\}.$  \\
	If $\max \{ \rho(I), \, \rho(J) \} \geqslant 2 \min \{ \rho(I), \, \rho(J) \}$, then $\rho(I + J) = \max \{ \rho(I), \, \rho(J) \}$.		
	\end{enumerate}	
\end{introthm*}

The paper is outlined as follows. In \Cref{section: Resurgence via prime extension}, we prove our main result regarding the relation symbolic powers of an ideal with its extension, and we also study \emph{prime extension property} and its applications. In \Cref{section: resurgence of sum of ideals}, we apply results from \Cref{section: Resurgence via prime extension} and we extend results regarding the sum of ideals in polynomial rings to finitely generated algebra domains over algebraically closed fields. 

\begin{acknowledgment}
The authors are thankful to Tài Huy Hà for suggesting the problem and valuable comments. We are also thankful to Souvik Dey and Hop D. Nguyen for some helpful discussions.
\end{acknowledgment}

\section{Symbolic Powers and Resurgence via flat extension}\label{section: Resurgence via prime extension}
	In this section we will show our main theorem (Theorem \ref{theorem: symbolic powers under flat extension}). Before proceeding to the main theorem we provide some useful lemmas. The first lemma is very well-known and we state it without proof. 
	\begin{lemma}\label{lemma: extension of an ideal}
		Let $M$ be a $R$-module where $R$ is a commutative ring, and $I$ be an ideal of $R$, then $(R/I) \otimes_R M \cong M/IM$. Furthermore, if $M$ is flat over $A$, we also have $I \otimes_R M \cong IM$.
	\end{lemma}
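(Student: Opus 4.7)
The plan is to derive both isomorphisms from the short exact sequence
\[
0 \to I \to R \to R/I \to 0
\]
by tensoring with $M$ over $R$.

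For the first isomorphism, I would apply the right exact functor $- \otimes_R M$ to the sequence above, yielding the exact sequence
\[
I \otimes_R M \xrightarrow{\alpha} R \otimes_R M \to (R/I) \otimes_R M \to 0.
\]
Composing $\alpha$ with the canonical identification $R \otimes_R M \cong M$ (sending $r \otimes m \mapsto rm$) produces a map whose image is precisely the submodule $IM \subseteq M$, since pure tensors $i \otimes m$ map to $im$. The first isomorphism theorem then gives $(R/I) \otimes_R M \cong M/IM$.

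For the second isomorphism, flatness of $M$ over $R$ means that $- \otimes_R M$ is exact, so $\alpha$ is injective. Since its image is $IM$, the map factors as an isomorphism $I \otimes_R M \xrightarrow{\sim} IM$. I would note explicitly that the inclusion $IM \hookrightarrow M$ corresponds under this identification to the map induced on tensor products by $I \hookrightarrow R$.

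There is no real obstacle here; both statements are textbook consequences of the right-exactness of tensor product and the definition of flatness, and the main point is to be careful about the canonical identifications. The only mild nuance is that the hypothesis in the second clause should read \emph{$M$ is flat over $R$} (the statement as typed says ``flat over $A$'', which appears to be a typo inherited from the context of the applications to follow).
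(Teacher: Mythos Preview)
Your proposal is correct and follows essentially the same approach as the paper: tensor the short exact sequence $0 \to I \to R \to R/I \to 0$ with $M$, identify the image of $I \otimes_R M \to M$ as $IM$ to get the first isomorphism via the first isomorphism theorem, and then invoke flatness for injectivity in the second part. Your observation about the typo (``flat over $A$'' should read ``flat over $R$'') is also accurate.
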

	
	
The next result is also a standard one. Since we could not find a resource, we provide a complete proof. 
	
	\begin{lemma}\label{lemma: tensor of localization}
	If a flat ring homomorphism $\phi: A\to B$ extends a prime $\mathfrak{p}$ to a prime $\mathfrak{p}B$, and $\mathfrak{p}$ is a contraction of some ideals in $B$, then $B_{\mathfrak{p}B} \cong A_{\mathfrak{p}} \otimes_A B_{\mathfrak{p}B}$ as $A$-algebra.
	\end{lemma}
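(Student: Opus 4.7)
The plan is to reduce this isomorphism to the classical fact that $A_{\mathfrak{p}} \otimes_A N \cong S^{-1} N$ for any $A$-module $N$, where $S = A \setminus \mathfrak{p}$, and then to show that $S$ already acts by units on $B_{\mathfrak{p}B}$, so the localization on the right collapses. Viewing $B_{\mathfrak{p}B}$ as an $A$-algebra through the composition $A \xrightarrow{\phi} B \to B_{\mathfrak{p}B}$, the first step gives $A_{\mathfrak{p}} \otimes_A B_{\mathfrak{p}B} \cong S^{-1}(B_{\mathfrak{p}B})$ as $A$-algebras, where $S^{-1}(B_{\mathfrak{p}B})$ denotes the localization of the ring $B_{\mathfrak{p}B}$ at the image of $S$.

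The crux is then to verify $\phi(S) \cap \mathfrak{p}B = \emptyset$, equivalently $\phi^{-1}(\mathfrak{p}B) \subseteq \mathfrak{p}$ (the reverse inclusion being automatic). This is exactly where the contraction hypothesis enters: choosing an ideal $J \subseteq B$ with $\phi^{-1}(J) = \mathfrak{p}$ gives $\phi(\mathfrak{p}) \subseteq J$, hence $\mathfrak{p}B \subseteq J$, and applying $\phi^{-1}$ yields $\phi^{-1}(\mathfrak{p}B) \subseteq \phi^{-1}(J) = \mathfrak{p}$. With $\phi(S) \subseteq B \setminus \mathfrak{p}B$ established, every element of $\phi(S)$ is already a unit in $B_{\mathfrak{p}B}$, so the canonical map $B_{\mathfrak{p}B} \to S^{-1}(B_{\mathfrak{p}B})$ is an isomorphism, and the two $A$-algebra structures agree by inspection of the structure maps.

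The main obstacle is precisely the equality $\phi^{-1}(\mathfrak{p}B) = \mathfrak{p}$; without the contraction assumption the extension $\mathfrak{p}B$ can easily contract to a prime strictly larger than $\mathfrak{p}$, and in pathological situations $\mathfrak{p}B$ could even equal $B$. Notably, the flatness of $\phi$ and the primeness of $\mathfrak{p}B$ are not directly invoked in producing this isomorphism itself; they are, however, part of the standing hypothesis of \Cref{theorem: symbolic powers under flat extension} and will be essential when this lemma is combined with \Cref{lemma: extension of an ideal} to identify $A_{\mathfrak{p}} \otimes_A I^n$ with $I^n A_{\mathfrak{p}}$ in the proof of symbolic-power extension.
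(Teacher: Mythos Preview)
Your argument is correct and considerably more streamlined than the paper's. The paper proceeds by first rewriting $B_{\mathfrak{p}B}$ as $\bigl((A\otimes_A B)\setminus(\mathfrak{p}\otimes_A B)\bigr)^{-1}(A\otimes_A B)$ via \Cref{lemma: extension of an ideal}, then constructing explicit mutually inverse maps $\psi_1,\psi_2$ on simple tensors of fractions, checking well-definedness by hand (this is where primeness of $\mathfrak{p}B$ and the identity $\phi(A)\cap\mathfrak{p}B=\phi(\mathfrak{p})$ are invoked), and finally verifying the additive compatibility of $\psi_1$ directly. You bypass all of this element chasing by appealing to the standard isomorphism $A_{\mathfrak{p}}\otimes_A N\cong S^{-1}N$ and then observing that $S$ already acts by units on $B_{\mathfrak{p}B}$; the contraction hypothesis is used in exactly the same way in both proofs, namely to establish $\phi^{-1}(\mathfrak{p}B)=\mathfrak{p}$. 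Your route is more conceptual and makes transparent that flatness of $\phi$ plays no role in this particular lemma (the paper's use of \Cref{lemma: extension of an ideal} obscures this), whereas the paper's explicit maps have the minor advantage of exhibiting the isomorphism concretely on elements, which dovetails with how it is unpacked later in the proof of \Cref{theorem: symbolic powers under flat extension}.
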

	
	\begin{proof}
	Applying Lemma \ref{lemma: extension of an ideal} to $B_{\mathfrak{p}B}$, we have $B_{\mathfrak{p}B} \cong ( (A \otimes_A B) \backslash (\mathfrak{p}B \otimes_A B) )^{-1} (A \otimes_A B)$. Our goal is to show $( (A \otimes_A B) \backslash (\mathfrak{p}B \otimes_A B) )^{-1} (A \otimes_A B) \cong A_{\mathfrak{p}} \otimes_A B_{\mathfrak{p}B}$.
	
	Define $$\psi_1 : ( (A \otimes_A B) \backslash (\mathfrak{p}B \otimes_A B) )^{-1} (A \otimes_A B) \rightarrow A_{\mathfrak{p}} \otimes_A B_{\mathfrak{p}B} \text{ by } \psi_1\left(\frac{a \otimes b}{c \otimes d}\right) = \frac{a}{c} \otimes \frac{b}{d}.   $$ 
	
	Similarly define  $$ \psi_2 : A_{\mathfrak{p}} \otimes_A B_{\mathfrak{p}B} \rightarrow ( (A \otimes_A B) \backslash (\mathfrak{p}B \otimes_A B) )^{-1} (A \otimes_A B) \text{ by } \psi_2\left(\frac{a}{c} \otimes \frac{b}{d}\right) = \frac{a \otimes b}{c \otimes d}.$$ 
	
	First we need to show both maps are well-defined. 
	
	Since $c \otimes d \notin \mathfrak{p} \otimes_A B$, then $c \notin \mathfrak{p}$. We also can consider $c \otimes d = 1 \otimes cd \notin 1 \otimes_A \mathfrak{p}B$, so $cd \notin \mathfrak{p}B$. We also know that $c \notin \mathfrak{p}$, this forces $d \notin \mathfrak{p}B$ since $\mathfrak{p}B$ is prime. Therefore $\psi_1$ is well-defined.
	
 We prove that	$\psi_2$ is also well-defined. Indeed, $c \notin \mathfrak{p}$ and $d \notin \mathfrak{p}B$, then $\phi(c)d \notin \mathfrak{p}B$ because $\phi(A) \cap \mathfrak{p}B = \phi(\mathfrak{p})$. To see this, let $\mathfrak{p} = \mathfrak{q}\cap A$ for an ideal $\mathfrak{q}$ in $B$. From this we  observe that $\mathfrak{p}B\cap A  = \left( \left( \mathfrak{q}\cap A\right) B\right) \cap A  = \mathfrak{q}\cap A=\pp$. This implies that $\phi(A) \cap \mathfrak{p}B = \phi(\mathfrak{p})$. Note that $\phi(A) \cap \mathfrak{p}B \supset \phi(\mathfrak{p})$. Let $y \in \phi(A)\cap \pp B$. Then $y=\phi(x)$ for some $x\in A$. We show that $x \in \pp$. Now since $y \in \pp B$ then $x\in \pp B\cap A=\pp$. 
 
Notice that $\psi_1$ and $\psi_2$ are inverses to each other. Thereofre we just need to show that $\psi_1$ is an $A$-algebra homomorphism, then we are done. Verifying the map commutes with the multiplication is straightforward, we only verify the addition here. 
   Let $\frac{a_1 \otimes b_1}{c_1 \otimes d_1}$ and $\frac{a_2 \otimes b_2}{c_2 \otimes d_2}$ be elements in $( (A \otimes_A B) \backslash (\mathfrak{p}B \otimes_A B) )^{-1} (A \otimes_A B)$. Then
	\begin{align*}
	\psi_1 \left(\frac{a_1 \otimes b_1}{c_1 \otimes d_1}\right) +  \psi_1 \left(\frac{a_2 \otimes b_2}{c_2 \otimes d_2}\right)&=\frac{a_1}{c_1} \otimes \frac{b_1}{d_1} + \frac{a_2}{c_2} \otimes \frac{b_2}{d_2}\\
	&= \frac{a_1 c_2}{c_1 c_2} \otimes \frac{b_1 d_2}{d_1 d_2} + \frac{c_1 a_2}{c_1 c_2} \otimes \frac{d_1 b_2}{d_1 d_2} \\
	&=\left(\frac{1}{c_1 c_2} \otimes \frac{1}{d_1 d_2}\right) \cdot ( a_1 c_2 \otimes b_1 d_2 + c_1 a_2 \otimes d_1 b_2 ).
	\end{align*}
Again, 	
\begin{align*}
\psi_1 \left( \frac{a_1 \otimes b_1}{c_1 \otimes d_1} + \frac{a_2 \otimes b_2}{c_2 \otimes d_2} \right) &= \psi_1 \left( \frac{ (a_1 \otimes b_1) \cdot (c_2 \otimes d_2)}{ (c_1 \otimes d_1) \cdot (c_2 \otimes d_2) } + \frac{ (a_2 \otimes b_2) \cdot (c_1 \otimes d_1)}{ (c_2 \otimes d_2) \cdot (c_1 \otimes d_1) } \right)\\
& = \psi_1 \left( \frac{1}{c_1 c_2 \otimes d_1 d_2} \cdot (a_1 c_2 \otimes b_1 d_2 + c_1 a_2 \otimes d_1 b_2) \right) \\
&= \left(\frac{1}{c_1 c_2} \otimes \frac{1}{d_1 d_2}\right) \cdot \left( a_1 c_2 \otimes b_1 d_2 + c_1 a_2 \otimes d_1 b_2 \right).
\end{align*}
	\end{proof}
    
The following theorem determines the associated primes of  an extension of an ideal.

	\begin{theorem}\label{theorem: associated primes under extension from Matsumura}
		\cite[Theorem 23.2]{matsumura1989commutative} Let $\phi: A\to B$ be a homomorphism of Noetherian rings, and let $E$ be an $A$-module and $G$ a $B$-module. If $G$ is flat over $A$, then we have the following
		$$	\Ass_B (E \otimes_A G) = \bigcup_{\mathfrak{p} \in \Ass_A(E)} \Ass_B (G/\mathfrak{p}G).$$
	\end{theorem}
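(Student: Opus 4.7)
The plan is to establish the claimed equality by proving the two inclusions separately. For the easier direction $\bigcup_{\mathfrak{p} \in \Ass_A(E)} \Ass_B(G/\mathfrak{p}G) \subseteq \Ass_B(E \otimes_A G)$, the starting point is that each $\mathfrak{p} \in \Ass_A(E)$ furnishes an $A$-linear injection $A/\mathfrak{p} \hookrightarrow E$. Since $G$ is flat over $A$, tensoring preserves this injection, and combined with the isomorphism $(A/\mathfrak{p}) \otimes_A G \cong G/\mathfrak{p}G$ (which is exactly \Cref{lemma: extension of an ideal} applied to the ideal $\mathfrak{p}$), one obtains a $B$-linear injection $G/\mathfrak{p}G \hookrightarrow E \otimes_A G$. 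Because associated primes are inherited by submodules, the inclusion $\Ass_B(G/\mathfrak{p}G) \subseteq \Ass_B(E \otimes_A G)$ follows for each $\mathfrak{p}$, and taking the union over $\mathfrak{p} \in \Ass_A(E)$ gives the desired containment.

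For the reverse inclusion, let $\mathfrak{q} \in \Ass_B(E \otimes_A G)$, so $\mathfrak{q} = \Ann_B(\xi)$ for some $\xi \in E \otimes_A G$. First I would reduce to the case of finitely generated $E$: writing $\xi = \sum_{i=1}^n e_i \otimes g_i$ and setting $E' = \sum A e_i$, flatness of $G$ gives an injection $E' \otimes_A G \hookrightarrow E \otimes_A G$, under which $\xi$ has the same $B$-annihilator, so one may replace $E$ by $E'$. Next, since $A$ is Noetherian and $E'$ is finitely generated, pick a prime filtration $0 = E_0 \subsetneq E_1 \subsetneq \cdots \subsetneq E_n = E'$ with $E_i/E_{i-1} \cong A/\mathfrak{p}_i$. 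Tensoring with the flat module $G$ yields short exact sequences
\begin{equation*}
0 \to E_{i-1} \otimes_A G \to E_i \otimes_A G \to G/\mathfrak{p}_i G \to 0,
\end{equation*}
and iterating the standard bound $\Ass_B(M) \subseteq \Ass_B(M') \cup \Ass_B(M'')$ for short exact sequences $0 \to M' \to M \to M'' \to 0$ gives $\Ass_B(E' \otimes_A G) \subseteq \bigcup_i \Ass_B(G/\mathfrak{p}_i G)$.

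The principal obstacle is that the filtration primes $\mathfrak{p}_i$ generally include members of $\supp(E')$ that are not in $\Ass_A(E')$, so the crude containment above is too generous: we must isolate only those $\mathfrak{p}_i$ that are genuinely associated. To do this, I would localize at the contraction $\mathfrak{p} := \mathfrak{q} \cap A$. The induced homomorphism $A_{\mathfrak{p}} \to B_{\mathfrak{q}}$ is flat, and in this local setting the element $\xi$ witnesses that the maximal ideal $\mathfrak{q} B_{\mathfrak{q}}$ is associated to $(E \otimes_A G)_{\mathfrak{q}} \cong E_{\mathfrak{p}} \otimes_{A_{\mathfrak{p}}} B_{\mathfrak{q}}$. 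From here the standard argument, using flatness to lift annihilators, shows simultaneously that $\mathfrak{p} A_{\mathfrak{p}} \in \Ass_{A_{\mathfrak{p}}}(E_{\mathfrak{p}})$, which pulls back to $\mathfrak{p} \in \Ass_A(E)$, and that $\mathfrak{q} B_{\mathfrak{q}} \in \Ass_{B_{\mathfrak{q}}}(G_{\mathfrak{q}}/\mathfrak{p} G_{\mathfrak{q}})$, pulling back to $\mathfrak{q} \in \Ass_B(G/\mathfrak{p}G)$. This last step is the technical core of the theorem, as it is where the interaction between flatness and associated primes must be handled carefully to match up the two sides of the base change.
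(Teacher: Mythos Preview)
The paper does not supply its own proof of this statement: it is quoted verbatim as \cite[Theorem 23.2]{matsumura1989commutative} and used as a black box. So there is no in-paper argument to compare against; what you have written is essentially a reconstruction of Matsumura's own proof, and the overall strategy (easy inclusion via flatness of $G$, then reduction to finitely generated $E$, prime filtration, and localization at $\mathfrak p=\mathfrak q\cap A$) is the standard one and is correct.

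One comment on organization: your filtration step and your localization step mesh more cleanly than your write-up suggests, and you do not actually need the vague ``standard argument lifting annihilators'' at the end. The missing link is the elementary observation (this is part (i) of the cited theorem) that for any prime $\mathfrak p'$ of $A$, every $Q\in\Ass_B(G/\mathfrak p'G)$ satisfies $Q\cap A=\mathfrak p'$: indeed $G/\mathfrak p'G$ is flat, hence torsion-free, over the domain $A/\mathfrak p'$, so elements of $A\setminus\mathfrak p'$ act as nonzerodivisors. Granting this, your filtration already yields $\mathfrak q\in\Ass_B(G/\mathfrak p_iG)$ for some $i$, and then $\mathfrak p_i=\mathfrak q\cap A=\mathfrak p$, so $\mathfrak q\in\Ass_B(G/\mathfrak pG)$ is immediate. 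The localization at $\mathfrak p$ is then needed only to show $\mathfrak p\in\Ass_A(E)$, which follows from the depth-zero argument: any $a\in\mathfrak pA_{\mathfrak p}$ regular on $E_{\mathfrak p}$ would, by flatness of $G_{\mathfrak q}$, be regular on $E_{\mathfrak p}\otimes_{A_{\mathfrak p}}G_{\mathfrak q}$, contradicting $\mathfrak qB_{\mathfrak q}\in\Ass$ of that module. With this reorganization the ``technical core'' you flag dissolves into two short, independent steps.
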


\begin{lemma}\label{proposition: associated primes of symbolic powers}
	Let $\phi: A\to B$ be a flat homomorphism. We set $\Ass_A^*(I)=\cup_{n\in \NN}\Ass_A(I^n)$.  If $I\subset A$ is a ideal then we have $\Ass_B(IB)=\cup_{\pp \in \Ass_A(I)} \Ass_B(\pp B) $. Furthermore, if $ \Ass_B^*(IB)=\{\pp B ~|~ \pp \in \Ass_A^*(I)
	\}$ then we have 
	\begin{enumerate}
\item $\Ass_B(IB)=\{\pp B~|~ \pp \in \Ass_A(I) \}$,
\item $\displaystyle \bigcup_{\qq\in\Ass_B(I^{(n)}B)}\qq \subset\bigcup_{\pp\in\Ass_A(I)}\pp B \text{ for all } n \geqslant 1.$ 
	\end{enumerate}
\end{lemma}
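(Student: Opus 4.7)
The plan is to apply Matsumura's Theorem \ref{theorem: associated primes under extension from Matsumura} to the flat $A$-module $B$ three times, each time taking for $E$ a suitable cyclic quotient of $A$.

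First I would establish the initial identity $\Ass_B(IB) = \bigcup_{\pp \in \Ass_A(I)} \Ass_B(\pp B)$ by applying the theorem with $E = A/I$: since $B$ is $A$-flat, Lemma \ref{lemma: extension of an ideal} gives $(A/I) \otimes_A B \cong B/IB$, and Matsumura's formula produces the claimed equality (with the standing convention $\Ass(J) := \Ass(A/J)$).

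For item (1), I would observe that the hypothesis $\Ass_B^*(IB) = \{\pp B : \pp \in \Ass_A^*(I)\}$ forces $\pp B$ to be a prime ideal of $B$ for every $\pp \in \Ass_A(I) \subset \Ass_A^*(I)$, because associated primes are prime. Hence $B/\pp B$ is a domain, so $\Ass_B(B/\pp B) = \{\pp B\}$, and inserting this into the identity above yields (1) at once.

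For item (2), the key step is to show that every $\pp' \in \Ass_A(I^{(n)})$ lies in $\Ass_A^*(I)$ and is contained in some $\pp \in \Ass_A(I)$. Starting from
$$I^{(n)} = \bigcap_{\pp \in \Ass_A(I)} \bigl(I^n A_\pp \cap A\bigr),$$
the standard embedding $A/(J_1 \cap J_2) \hookrightarrow A/J_1 \oplus A/J_2$ (and induction) yields $\Ass_A(I^{(n)}) \subset \bigcup_{\pp \in \Ass_A(I)} \Ass_A(I^n A_\pp \cap A)$. By the exactness of localization, $\Ass_A(I^n A_\pp \cap A)$ consists precisely of those $\qq \in \Ass_A(I^n)$ with $\qq \subset \pp$, so every such $\pp'$ satisfies $\pp' \subset \pp$ for some $\pp \in \Ass_A(I)$ and in particular $\pp' \in \Ass_A^*(I)$. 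Now apply Matsumura's theorem once more with $E = A/I^{(n)}$ to obtain $\Ass_B(I^{(n)}B) = \bigcup_{\pp' \in \Ass_A(I^{(n)})} \Ass_B(B/\pp' B)$, and reuse the domain argument of (1) to get $\Ass_B(B/\pp' B) = \{\pp' B\}$. Thus every $\qq \in \Ass_B(I^{(n)} B)$ has the form $\pp' B$, and $\pp' \subset \pp$ gives $\qq = \pp' B \subset \pp B$, which is exactly the containment (2).

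The main obstacle is the bookkeeping for $\Ass_A(I^n A_\pp \cap A)$ under the convention, used in the paper, that $\Ass_A(I)$ includes embedded primes; however, this reduces to the standard identification $\Ass_{A_\pp}(I^n A_\pp) = \{\qq A_\pp : \qq \in \Ass_A(I^n),\ \qq \subset \pp\}$ together with the bijection between associated primes upstairs and downstairs under contraction along $A \to A_\pp$, so it does not introduce any substantive difficulty beyond careful indexing.
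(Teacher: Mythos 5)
Your proposal is correct and follows essentially the same route as the paper: apply Matsumura's Theorem \ref{theorem: associated primes under extension from Matsumura} to the flat $A$-module $B$ with $E=A/I$ and $E=A/I^{(n)}$, and use the hypothesis to see that each relevant $\pp B$ is prime so that $\Ass_B(B/\pp B)=\{\pp B\}$. The only difference is that where the paper cites an external lemma for the fact that associated primes of $I^{(n)}$ are contained in associated primes of $I$, you derive it directly from the intersection presentation of $I^{(n)}$, which also makes explicit that $\Ass_A(I^{(n)})\subset\Ass_A^*(I)$ and hence that the primes $\pp'B$ appearing in part (2) are indeed prime.
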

\begin{proof}
	Since  $\phi$ is a flat  then $A/I \otimes_A B \cong B/IB $. Now  by \Cref{theorem: associated primes under extension from Matsumura}, 
$$ \Ass_B(IB)=	\Ass_B (A/I\otimes_A B) = \bigcup_{\mathfrak{p} \in \Ass_A(A/I)} \Ass_B (B /\mathfrak{p}B)=\bigcup_{\pp \in \Ass(I)}\Ass_B(\pp B).$$
By the assumption $\pp B $ is prime for each $\pp \in \Ass_A(I)$. Hence we get  
 $$\Ass_B(IB) = \bigcup_{\pp \in \Ass(I)}\Ass_B(\pp B)=\{\pp B~|~ \pp \in \Ass(I) \}.$$
Note that by \cite[Lemma 2.2]{ha2023binomial} if $\pp \in \Ass(I^{(n)})$ then $\pp \in \Ass(I^n) $ and  $\pp \subset \qq $ for some $\qq\in \Ass(I)$. Thus we get $\displaystyle \cup_{\pp\in \Ass(I^{(n)})} \pp \subset \cup_{\pp\in \Ass(I)} \pp $ for all $n \geqslant 1$. Since $ \Ass^*(IB)=\{\pp B ~|~ \pp \in \Ass^*(I) \}$ then using \Cref{theorem: associated primes under extension from Matsumura} we get 
$$ \bigcup_{\qq\in\Ass(I^{(n)}B)}\qq = \bigcup_{\substack{\qq\in \Ass(\pp B),\\\pp \in \Ass\left(I^{(n)} \right) }}\qq = \bigcup_{\pp \in \Ass(I^{(n)})} \pp B  \subset \bigcup_{\pp\in\Ass(I)}\pp B, \text{ as } \bigcup_{\pp \in \Ass\left( I^{(n)} \right)} \pp  \subset \bigcup _{\pp \in \Ass(I)} \pp. $$

	\end{proof}


We are ready to prove the main theorem of this section.  The theorem states that if a flat morphism extends associative primes to associative primes and each associative prime is a contraction of some ideal, then symbolic powers of the extension of an ideal is the extension of the symbolic powers of the ideal. 

\begin{theorem} \label{theorem: symbolic powers under flat extension}Let $A$ and $B$ be Noetherian rings  and $\phi: A\to B$ be a flat morphism. Assume that $I\subset A$ is a proper ideal such that $\Ass^*(IB)=\{\pp B~|~ \text{ for all } \pp\in \Ass^*(I) \}$, and every associated prime of $I$ is a contraction of some ideal in $B$. Then $(IB)^{(n)} = I^{(n)}B $, for all $n \geqslant 1$.   Furthermore, if $I^l$ is a contraction of some ideals for all $\ell\geqslant 1$, then $I^{(m)} \subset I^r$, if and only if $ IB^{(m)} \subset IB^r$, for all $m,r \geqslant 1$, then we get $\rho(IB)=\rho(I)$ and $\rho_a(IB)=\rho(I)$. 
\end{theorem}
	\begin{proof} 
	First we check that as in Lemma \ref{proposition: associated primes of symbolic powers},  $\Ass_B (IB) = \{ {\pp}B ~|~ {\pp} \in \Ass_A (I) \} $. 	
Using the definition of symbolic powers of an ideal and the assumption, we get
		
$$(IB)^{(n)} = \bigcap_{{\pp} \in \Ass_B(IB)} ((IB)^n B_{{\pp}} \cap B)=\bigcap_{\{ {\pp}B ~|~ {\pp} \in \Ass_A(I) \}} ((IB)^n B_{ {\pp}B } \cap B).$$

Applying Lemma \ref{lemma: tensor of localization}, we have $B_{\mathfrak{p}B} \cong A_{\mathfrak{p}} \otimes_A B_{\mathfrak{p}B}$. Then by Lemma \ref{lemma: extension of an ideal}, we have the following:
			\begin{align*}
			(IB)^n B_{\mathfrak{p}B} \cap B &\cong (I^n \otimes_A B) (A_{\mathfrak{p}} \otimes_A B_{\mathfrak{p}B}) \cap (A \otimes_A B)
		\end{align*}
\par Next we want to show that $(( I^n A_{\mathfrak{p}} \cap A ) \otimes_A B_{\mathfrak{p}B}) \cap (A \otimes_A B) = ( I^n A_{\mathfrak{p}} \otimes_A B_{\mathfrak{p}B} ) \cap (A \otimes_A B)$. 

It is known that $A \otimes_A B \subset A \otimes_A B_{\mathfrak{p}B} $ if $S = B \backslash \mathfrak{p}B$ has no zero divisors. Suppose $S$ has no zero divisors, then applying a property from \cite[section 7]{matsumura1989commutative}, which states that if $M_2$ is flat over $M_1$ and $M_3$ is flat over $M_2$, then $M_3$ is flat over $M_1$. Here we consider $M_3$ as $B_{\mathfrak{p}B}$, $M_2$ as $B$, and $M_1$ as $A$, then we can see $B_{\mathfrak{p}B}$ is flat over $A$. By \cite[Theorem 7.4(I)]{matsumura1989commutative}, we consider $I^n A_{\mathfrak{p}}$ and $A$ as submodules of $A_{\mathfrak{p}}$ as $A$-modules, then $\left(I^n A_{\mathfrak{p}} \otimes_A B_{\mathfrak{p}B}\right) \cap \left( A \otimes_A B_{\mathfrak{p}B}\right) = ( I^n A_{\mathfrak{p}} \cap A ) \otimes_A B_{\mathfrak{p}B}$.  Therefore, intersecting it with $A\otimes_A B$ will not change the original intersection. 

If $S$ has zero divisors, then $B \backslash B_{\mathfrak{p}B}$ has exactly the same set of zero divisors in $S$, denoted as $S'$. Note that any $a \otimes b \in I^n A_{\mathfrak{p}} \otimes_A S'$ is always $0$ in $I^n A_{\mathfrak{p}} \otimes_A B_{\mathfrak{p}B}$, and similarly for $a \otimes b \in A \otimes_A S'$ in $A \otimes_A B_{\mathfrak{p}B}$. Observe that these 0's in $A \otimes_A B_{\mathfrak{p}B}$ are 0's in $( I^n A_{\mathfrak{p}} \otimes_A B_{\mathfrak{p}B} ) \cap (A \otimes_A B)$, so we can always intersect $I^n A_{\mathfrak{p}} \otimes_A B_{\mathfrak{p}B}$ with $A \otimes_A B_{\mathfrak{p}B}$ first, then $A \otimes_A B$, which leads to the same equality. Therefore $( I^n A_{\mathfrak{p}} \otimes_A B_{\mathfrak{p}B} ) \cap (A \otimes_A B) = (( I^n A_{\mathfrak{p}} \cap A ) \otimes_A B_{\mathfrak{p}B}) \cap (A \otimes_A B)$ as we desire. 

Applying lemma \ref{lemma: extension of an ideal} again on $(( I^n A_{\mathfrak{p}} \cap A ) \otimes_A B_{\mathfrak{p}B}) \cap (A \otimes_A B)$, we will have our second isomorphism $(( I^n A_{\mathfrak{p}} \cap A ) \otimes_A B_{\mathfrak{p}B}) \cap (A \otimes_A B) \cong (I^n A_p \cap A )B_{\pp B} \cap B$. Note that lemma \ref{lemma: extension of an ideal} is used previously for the ``backward" direction for the first isomorphism, and now applying it again for the ``forward" direction for our second isomorphism, so we will have a new equality (two isomorphisms give an identity map), which is $(IB)^n B_{\mathfrak{p}B} \cap B = (I^n A_p \cap A )B_{\pp B} \cap B$. Then the following equality is given:
\begin{align*}
(IB)^{(n)}  = \bigcap_{\pp\in \Ass(I)} (IB)^nB_{\pp B}\cap B = \bigcap_{\pp \in \Ass(I)} ((I^n A_p \cap A )B_{\pp B}) \cap B
\end{align*}

We want to prove the following labeled equalities:

\begin{align}
&\notag (IB)^{(n)}  = \bigcap_{\pp \in \Ass(I)} ((I^n A_p \cap A )B_{\pp B}) \cap B \\
 \label{eqn1}     &= \bigcap_{\pp \in \Ass(I)} I^{(n)}B_{\pp B} \cap B \\
  \label{eqn2}                & = \bigcap_{\pp \in \Ass(I)} I^{(n)}B B_{\pp B} \cap B \\
 \label{eqn3}     &= I^{(n)}B_W \cap B, \text{ where } W=B - \bigcup_{\pp\in \Ass(I)} \pp B \\
  \label{eqn4}   & =(I^{(n)}B)B_W\cap B \\
 \label{eqn5}       &= I^{(n)}B 
\end{align}

\vspace{5mm}

First we prove equality \ref{eqn1}.  Since $\phi$ is flat, and  extension commutes with intersection under flat morphism, we get 
\begin{align*}
	I^{(n)}B_{\pp B}= \left(\cap_{\pp^\prime\in \Ass(I)} (I^nA_\pp^{\prime}\cap A)\right) B_{\pp B}= \bigcap_{\pp^\prime\in \Ass(I)} ((I^n A_{\pp^\prime}\cap A) B_{\pp B})= (I^nA_\pp \cap A)B_{\pp B}.
\end{align*} 

The equality \ref{eqn2} is straight forward.

The proof of equality \ref{eqn3} is similar to \cite[Remark 2.8]{grifo2021symbolic}. We add the proof for completion. Note that 
$I^{(n)}B_W\cap B=\{ r \in B ~|~ rs\in I^{(n)} \text{ for some } s\in W \}$ and thus 
\begin{align*}
\displaystyle I^{(n)}B_W\cap B \subset \bigcap_{\pp \in \Ass(I)}\left( \left( I^{(n)}B_{\pp B} \right) \cap B \right).
\end{align*}

Now let $r \in \bigcap_{\pp \in \Ass(I)}\left( \left( I^{(n)}B_{\pp B} \right) \cap B \right) $. Then for each $\pp B$ there is $s\not\in \pp B$ such that $rs\in I^{(n)}B$. Thus $I^{(n)}B:r $ is not contained in any $\pp B \in \Ass(IB)$. Since the associated primes are finite then by Prime avoidance theorem $$I^{(n)}B:r \not \subset \bigcup_{\pp \in \Ass(I)} \pp B .$$ 

Then there is a $s \in W$ such that $sr \in I^{(n)}B$. Hence we have the equality as sets.

Again the equality \ref{eqn4} is straight forward. Lastly, we prove equality \ref{eqn5}. 
Now from \cite[Proposition 3.11(ii)]{atiyah1994introduction} $$(I^{(n)}B)B_W\cap B =\bigcup_{s\in W} I^{(n)}B : s .  $$

Now by Lemma \ref{proposition: associated primes of symbolic powers}, we have
\begin{align*}
\displaystyle \bigcup_{\Ass \qq \in\left( I^{(n)}B  \right)} \qq \subset\bigcup_{\qq\in \Ass (IB)}\qq=\bigcup_{\pp \in \Ass(I)}\pp B.
\end{align*}

Therefore whenever $s\in W$ then $s \not \in \qq$, where $\qq$ is an associated prime of $I^{(n)}B$. By \cite[Proposition 4.4(iii)]{atiyah1994introduction} we get 
   $$(I^{(n)}B)B_W\cap B =\bigcup_{s\in W} I^{(n)}B : s =I^{(n)}B.   $$
   
Note that $I^{(m)} \subset I^{(m)}B \cap A \subset IB^r \cap A=I^rB\cap A =I^r$, the last equality holds since $I^r$ is a contraction of some ideal. 
	\end{proof}
	
\begin{remark} Note that if placing $\Ass^*(IB)=\{\pp B~|~ \text{ for all } \pp\in \Ass^*(I) \}$ by $\Ass(IB^N)=\{\pp B~|~ \text{ for all } \pp\in \Ass(I^N) \}$, then the theorem \ref{theorem: symbolic powers under flat extension} is only true up for $ 1 \leqslant n \leqslant N$.

If one uses the minimal prime definition of the symbolic powers, then it is enough to assume that 
 $\Min(IB)=\{\pp B ~|~ \pp \in \Min (I) \}$.   
 \end{remark}
 
 The following example shows that the condition in Theorem \ref{theorem: symbolic powers under flat extension} that the associated primes are contraction of the ideals is necessary. 
 
 \begin{example}\label{example: contraction condition}
 	Consider the natural map $\phi: \ZZ[x] \to \QQ[x]$. Note that it is a flat map. Take $I=\langle x,2\rangle \subset \ZZ[x]$. Then $I$ is not a contraction of any ideals of $\QQ[x]$. Since $I$ is a maximal ideal, then $I^{(n)}=I^n$ for all $n\geqslant1$. But $I \QQ[x]=\langle x \rangle$. Hence $(I \QQ[x])^{(n)}=\langle x^n \rangle $ for all $n\geqslant 1$. But $I^{(n)}\QQ[x]=\langle x\rangle$ for all $n\geqslant 1$.  Thus $ (I \QQ[x])^{(n)} \neq I^{(n)}\QQ[x]$ for all $n\geqslant 2$. 
 \end{example}
 In \Cref{theorem: symbolic powers under flat extension} we extend \cite[Theorem 3.6]{GHMNmatroid}, \cite[Theorem 4]{Akessesh}, and \cite[proposition 2.1]{walker2018uniform} by replacing the \emph{faithfully flat} condition with \emph{flat} condition. Note that assumptions in \Cref{theorem: symbolic powers under flat extension} implies all these theorems and proposition, but not vice versa. We now give simple examples satisfying assumptions in \Cref{theorem: symbolic powers under flat extension}, but violating the \emph{faithfully flat} condition required for all theorems and proposition we just mentioned. 
 
 \begin{example}\label{example: flat not faithfully flat 1}
 	Consider  the map $\phi: \ZZ \to \ZZ\left[1 \slash p\right]$, where $p$ is a prime number, then this map is obviously \emph{flat}, since $I \otimes_{\ZZ} \ZZ\left[1 \slash p\right] \to \ZZ \otimes_{\ZZ} \ZZ\left[1 \slash p\right]$ is always injective for any ideal $I \subset \ZZ$. However, it is not \emph{faithfully flat} since $Spec(\ZZ\left[1 \slash p\right]) \to Spec(\ZZ)$ is not surjective for closed points, in particular, $(p)$ is the closed point. Finding any ideal $I$ in $\ZZ$ such that $(p)$ is not an associated prime of $I$, then $I^{(n)} \ZZ\left[1 \slash p\right] = (I \ZZ\left[1 \slash p\right])^{(n)}$. This example is also a good comparison to Example \ref{example: contraction condition}, since any primes in $\ZZ$ is a contraction of the same primes in $\ZZ\left[1 \slash p\right]$ except $(p)$. This example can be extended to PID and its prime ideal, since prime ideal (other than zero ideal) is also a maximal ideal, then it can be shown that map from PID to itself with reversing a prime ideal is \emph{flat} but not \emph{faithfully flat} by the same reason.
 \end{example}

\begin{example}\label{example: flat not faithfully flat 2}
  Let $A$ be a ring having at least two different prime ideals such that one is not contained in another one, and $B=A_\pp$ be localization of $A$ at an arbitrary prime $\mathfrak{p}$. It is a well-known fact that localization is an exact functor, so $B$ is a \emph{flat} $A$-module, and prime ideal $\mathfrak{p}$ in $A$ is the contraction of $\mathfrak{p}B$. Note that $\mathfrak{p}B$ is the only prime in $B$, then $\Spec(B) \rightarrow \Spec(A)$ is never surjective, so $B$ is not \emph{faithfully flat} as a $A$-module according to \cite[ex 3.16]{atiyah1994introduction}. 
\par 

One can take $A=\ZZ$ and $B$ as $\mathbb{Z}_{(p)}$ where $p$ is a prime number. Take  $I=(p^m)$ for some $m\geqslant 1$. Then $I$ satisfies assumptions in  \Cref{theorem: symbolic powers under flat extension}. Furthermore, letting $B$ be any localization of $A$ will meet the assumptions in \Cref{theorem: symbolic powers under flat extension}. This tells us that the stalk of $\Spec A$ gives us enough information of symbolic power of an ideal generated by the corresponding point in $\Spec A$. 
Another example would be take $A=\CC[t]$ and $B=\CC[t]_{(p(t))}$, where $p(t)$ is an irreducible polynomial in $\CC[t]$. 
	\end{example}	
	
	Applying our Theorem \ref{theorem: symbolic powers under flat extension}, we will have the following corollary, which is crucial for the main theorems in our section \ref{section: resurgence of sum of ideals}. We also explain the importance of this corollary in details at the beginning of the section \ref{section: resurgence of sum of ideals}.

 \begin{corollary}\label{corollary: resurgence via prime extension}
Let $\phi: A \to B$ be a flat morphism. If $\pp B$ is a prime ideal for all primes $\pp$ in $A$, then $\rho(I)=\rho(IB)$, and $\rho_a(I)=\rho_a(I)$ for any proper ideal $I$. 
 \end{corollary}
 \begin{proof}
The proof follows directly from Theorem \ref{theorem: symbolic powers under flat extension}. Note that if $\pp B$ is prime for all primes $\pp$ in $A$, then $\phi$ is faithfully flat and $IB\cap A=I$ for all ideals $I$ in $A$.  Indeed, by  \cite[theorem 7.2]{matsumura1989commutative}, if for every maximal ideal $\mathfrak{m}$ in $A$ we have $\mathfrak{m}B \neq B$, then $\phi$ is faithfully flat. 
\end{proof}
Morphisms above with the property that that extend every prime ideal as an prime ideal, also known as morphims with \emph{prime extension property}, see \cite{hochsterprime}. We will explain the geometric meaning of \emph{flat} and \emph{prime} extension, but before that, we need to recall the definition of \emph{fiber over points} first.

\begin{definition}		
	Let $f: X \to Y$ be a morphism of schemes, and let $y \in Y$ be a point. Let $\kappa(y)$ be the residue field of y, i.e. $\kappa(y) = \mathcal{O}_y / \mathfrak{m}_y$, and let $\Spec \, \kappa(y) \rightarrow Y$ be the natural morphism. Then we define the \emph{fiber of the morphism $f$ over the point $y$} to be the scheme 
	$$	X_y = X \times_Y \hspace{0.5mm} Spec \, \kappa(y).$$
	Note that \emph{fiber} $X_y$ is a scheme over $\kappa(y)$.
\end{definition}

Let  $f: \Spec B  \to \Spec A$ be a morphism of schemes, induced by ring homomorphism $\phi: A \to B$. Then the fiber over $\mathfrak{p} \in \Spec A$ is given by $\Spec  B_{\mathfrak{p}} = \Spec \left(B \otimes_A \kappa(\mathfrak{p})\right) = \Spec \left(B \otimes_A A_{\mathfrak{p}}/ \mathfrak{p} A_{\mathfrak{p}}\right)$. We have the following proposition:

\begin{proposition}
	If  the ring homomorphism $\phi: A \to B$ is flat then $B_{\mathfrak{p}}$ is an integral domain if only if $\mathfrak{p}B$ prime.
\end{proposition}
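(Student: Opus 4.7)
The plan is to identify the fiber ring with a localization of the quotient $B/\mathfrak{p}B$, use flatness to show that this localization map is injective, and then read off both directions of the equivalence from elementary facts about integral domains. Setting $S := \phi(A\setminus \mathfrak{p})$, standard tensor product manipulations give
\[ B_{\mathfrak{p}} \;=\; B\otimes_A \kappa(\mathfrak{p}) \;\cong\; S^{-1}(B/\mathfrak{p}B), \]
so the task reduces to comparing $S^{-1}(B/\mathfrak{p}B)$ being a domain with $B/\mathfrak{p}B$ being a domain.

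The crucial step uses flatness of $\phi$. Base-changing along $A \to A/\mathfrak{p}$ shows that $B/\mathfrak{p}B$ is flat over $A/\mathfrak{p}$. Because $A/\mathfrak{p}$ is a domain, every nonzero element $\bar a \in A/\mathfrak{p}$ is a non-zero-divisor, and the standard fact that tensoring the injection $A/\mathfrak{p} \xrightarrow{\bar a} A/\mathfrak{p}$ with a flat module preserves injectivity shows that every element of $S$ acts as a non-zero-divisor on $B/\mathfrak{p}B$. Consequently the canonical localization map
\[ B/\mathfrak{p}B \;\hookrightarrow\; S^{-1}(B/\mathfrak{p}B) \;=\; B_{\mathfrak{p}} \]
is injective.

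From this injection the equivalence is immediate. If $B_{\mathfrak{p}}$ is a domain (hence in particular nonzero), then $B/\mathfrak{p}B$ embeds into a domain and is therefore itself a nonzero domain, so $\mathfrak{p}B$ is a proper prime ideal of $B$. Conversely, if $\mathfrak{p}B$ is a proper prime, then $B/\mathfrak{p}B$ is a nonzero domain whose localization $B_{\mathfrak{p}}$ is again a domain. The one place where flatness is essential is the injectivity step: without it, an element of $S$ could become a zero-divisor on $B/\mathfrak{p}B$, so the fiber could collapse information and knowing it is a domain would no longer force $\mathfrak{p}B$ to be prime. That is where I expect the substantive content of the argument to sit; the rest is bookkeeping.
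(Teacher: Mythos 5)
Your proof is correct and follows the same route as the paper: identify the fiber with (a localization of) $B/\mathfrak{p}B$ and compare domain-ness on both sides. In fact you supply the one substantive step that the paper's proof leaves as ``easy to see'' --- namely that $B_{\mathfrak{p}} \cong S^{-1}(B/\mathfrak{p}B)$ with $S=\phi(A\setminus\mathfrak{p})$, and that flatness forces the elements of $S$ to be non-zero-divisors on $B/\mathfrak{p}B$, so the localization map is injective and the equivalence goes through in both directions.
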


\begin{proof}
	Since $\phi$ is flat, we can consider $\mathfrak{p} \otimes_A B$ as $\mathfrak{p}B$. It's easy to see the followings:
	$$B_{\mathfrak{p}} \text{ is an integral domain } \iff B/\mathfrak{p}B \text{ is an integral domain }\iff \mathfrak{p}B \text{ is prime}.$$
\end{proof}

This proposition implies that if $A$ is reduced, and $\phi$ has \emph{prime extension property}, then $B$ is also reduced. Indeed, if $A$ contains the zero ideal as a prime ideal, so does $B$. If $\phi$ is a prime extension, then every $B_{\mathfrak{p}}$ is an integral domain. We are ready to give a geometric example if we recall one more lemma.

\begin{lemma}\label{lemma Lemma 10.39.5 from Stack}
	\cite[Lemma 10.39.5]{stacks-project} Let $M$ be an $R$-module, then $M$ be a flat $R$-module if only if for every ideal $I \subset R$ the map $I \otimes_R M \rightarrow R \otimes_R M$ is injective.
\end{lemma}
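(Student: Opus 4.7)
The plan is to dispatch the forward direction by inspection and the backward direction by a $\Tor$-vanishing argument. The forward direction is immediate: if $M$ is flat, then $-\otimes_R M$ preserves injections, and the inclusion $I \hookrightarrow R$ is an injection; therefore $I \otimes_R M \to R \otimes_R M$ is injective.

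For the converse, I would invoke the standard characterization that $M$ is flat if and only if $\Tor_1^R(N, M) = 0$ for every $R$-module $N$. Applying $-\otimes_R M$ to the short exact sequence $0 \to I \to R \to R/I \to 0$ and using the long exact sequence of $\Tor$ identifies the kernel of $I \otimes_R M \to R \otimes_R M$ with $\Tor_1^R(R/I, M)$. The hypothesis then rephrases as $\Tor_1^R(R/I, M) = 0$ for every ideal $I \subset R$.

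Next I would upgrade this to arbitrary finitely generated modules by a filtration argument. Given $N$ generated by $x_1, \ldots, x_n$, set $N_i = Rx_1 + \cdots + Rx_i$; each quotient $N_i / N_{i-1}$ is cyclic, hence of the form $R/J_i$ for some ideal $J_i \subset R$. Induction on $i$, using the long exact $\Tor$-sequence attached to $0 \to N_{i-1} \to N_i \to R/J_i \to 0$, yields $\Tor_1^R(N, M) = 0$. Finally, writing an arbitrary module $N$ as the filtered colimit of its finitely generated submodules and exploiting that $\Tor_1^R(-, M)$ commutes with filtered colimits completes the proof, since flatness is equivalent to $\Tor_1^R(-, M) \equiv 0$.

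The main obstacle, conceptually, is that one must already have two homological facts in hand: the equivalence of flatness with the vanishing of $\Tor_1^R(-, M)$, and the commutation of $\Tor$ with filtered colimits. A proof that avoids derived functors altogether would instead carry the Snake Lemma through the tensored filtration directly, verifying step by step that $-\otimes_R M$ preserves the inclusion of every submodule of $N_i$ into $N_i$; that bookkeeping, rather than any single conceptual insight, is the most delicate part of the argument.
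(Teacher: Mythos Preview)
Your argument is correct and is essentially the standard $\Tor$-vanishing proof of the ideal criterion for flatness. Note, however, that the paper does not supply its own proof of this lemma: it is stated with a citation to \cite[Lemma 10.39.5]{stacks-project} and invoked as a black box. Your write-up is therefore not competing with anything in the paper; it is simply a self-contained account of the cited result, and the route you take (identify the kernel with $\Tor_1^R(R/I,M)$, filter finitely generated modules by cyclic quotients, pass to filtered colimits) is the same one found in the Stacks Project and in standard references such as Matsumura or Bourbaki.
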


\begin{example}
	Let $\kk$ be an algebraically closed field, and let 
	$$	A =  \kk [t]  \text{ and }B = \kk [t,x,y]/(y - x^2 - tx).$$
	
	Let $\phi$ to be the ring homomorphism from $A$ to $B$ by sending $t$ to $t$, and let $\Phi$ be the morphism from $\Spec B$ to $\Spec A$ induced by $\phi$. Note that $A\otimes_AB$ is a domain. It is not hard to see $\phi$ is a \textit{flat} morphism. The map from $I \otimes_A B$ to $A \otimes_A B$ is always injective for every ideal $I \subset A$, then by Lemma \ref{lemma Lemma 10.39.5 from Stack} we get that $\phi$ is flat. The injectivity comes from the fact that both $y - x^2$ and $y - x^2 - tx$ are irreducible in $\kk[t,x,y]$.

	We identify closed points of $A$ with elements of $\kk$, and obviously the $(t - c)$ in $\kk [t,x,y]/(y - x^2 - tx)$ is irreducible for every $c \in \kk$, so $\phi$ is also a prime extension. The fiber $\Spec B_{(t-c)}$ is given by the plane curve $y = x^2 + cx$ in $\mathbb{A}_\kk^2$, which is an irreducible variety, and those plane curves are even homeomorphic to each other. 
	
	The situation doesn't always perform so nicely. If we consider $A$ as $\kk[t,s]$ and $B$ as $\kk[t,s,x,y] / (y - x^2 -tx)$, we also have $(t - c)$ as our point, then $B \otimes_A A_{\mathfrak{p}} / \mathfrak{p} A_{\mathfrak{p}}$ will be $\kk[s,s^{-1},x,y] / (y - x^2 -cx) = \kk(s)[x,y] / (y - x^2 -cx)$. However we have $(t-a, s-b)$ as our point where $a$,$b \in \kk$, now the corresponding fiber is $\Spec \kk[x,y] / (y - x^2 - ax)$, which is not homeomorphic to the former.
\end{example}

	\section{Resurgence of sum of ideals} \label{section: resurgence of sum of ideals}
	
	First note that in this section, we assume that all rings and algebras are \emph{regular}. We summarize current results of \emph{resurgence number} of sums of ideals for polynomial rings over a field, our goal is to prove these results for sums of finitely generated graded {$\kk$-algebras} which are integral domains, where $\kk$ is algebraically closed. In \cite[Theorem 2.6]{fiberprojective}, it has been proved that if 
	$I$ and $J$ are ideals in different polynomial rings over a same field, then
$$	\rho_a(I + J) = \overline{\rho}_{\alpha}(I + J) = \overline{\rho}(I + J) = {\max}\{ \rho_a(I),  \rho_a(J) \} .$$ 
	In \cite[Theorem 2.3]{sharpbound}, it has been proved that 
	 $$
	\max \{ \rho(I), \, \rho(J) \} \leqslant \rho(I + J) \leqslant \max \left\{ \rho(I), \, \rho(J), \, \frac{2 (\rho(I) + \rho(J))}{3} \right\}.
	$$

In our paper, \cite[Theorem 2.6]{fiberprojective} is generalized to Theorem \ref{theorem: asymptotic resurgence of sum of ideals}, and \cite[Theorem 2.3]{sharpbound} is generalized to Theorem \ref{theorem: resurgence bound of sum of ideals}. The generalization is not trivial. First, it is obvious that the maps from two polynomials rings $A$ and $B$ over a field $\kk$ to $A \otimes_{\kk} B$ preserve resurgence numbers. However, for our generalization, we need to apply Corollary \ref{corollary: resurgence via prime extension} to achieve that resurgence number of an ideal is same as resurgence number of its extension. Second, in the proof of Lemma \ref{lemma: a-resurgence of  I +J > a -sup resurgence}, we use vector space method, while Gröbner basis method was used in the polynomial case but could not be used in $\kk$-algebra case.

In  Lemma \ref{lemma: flat and prime under tensor product}, we will specify that under proper conditions resurgence of the extension is the same as the resurgence of the ideals.

		\begin{lemma}\label{lemma: flat and prime under tensor product}
		Let $A$ and $B$ be finitely generated $\kk$ algebras, where $\kk$ is algebraically closed, which are integral domains. If  $\phi: A \to R$ is  a canonical ring homomorphism, where $R=A\otimes_{\kk} B$, then $\phi$ is {flat} and has {prime extension} property. Furthermore, $ \rho(IR) =\rho(I)$ and $\rho_a(IR)=\rho_a(I)$ for any proper ideal $I$ from $A$ (or $B$).
	\end{lemma}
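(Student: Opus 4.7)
The plan is to verify the two conditions (flatness and prime extension) for $\phi \colon A \to R = A \otimes_{\kk} B$, and then invoke \Cref{thm: resurgence via prime extension} to immediately conclude the resurgence equalities. The substantive content is thus entirely in checking these two conditions.

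For flatness, I would observe that since $\kk$ is a field, $B$ is a free $\kk$-vector space; choosing a $\kk$-basis $\{b_i\}_{i \in I}$ of $B$ gives $R = A \otimes_\kk B \cong \bigoplus_{i \in I} A \cdot (1 \otimes b_i)$ as an $A$-module (the $A$-action being through the first factor, as in the statement). A free module is flat, so $\phi$ is flat. Alternatively one can argue that $B$ is flat over $\kk$ (vacuously), and base change preserves flatness: $A \otimes_\kk B$ is flat over $A \otimes_\kk \kk = A$.

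For the prime extension property, let $\pp \subset A$ be prime. Then
$$R/\pp R \;=\; (A \otimes_\kk B)/(\pp \otimes_\kk B) \;\cong\; (A/\pp) \otimes_\kk B.$$
Since $A/\pp$ and $B$ are finitely generated $\kk$-algebra domains and $\kk$ is algebraically closed, the tensor product $(A/\pp) \otimes_\kk B$ is again an integral domain; this is the standard fact that over an algebraically closed field, a tensor product of domains is a domain (e.g.\ a fibered product of integral $\kk$-schemes over $\Spec \kk$ is integral when $\kk$ is algebraically closed). Hence $\pp R$ is prime in $R$, so $\phi$ has the prime extension property.

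Having established these two properties, \Cref{thm: resurgence via prime extension} (which in turn uses \Cref{theorem: symbolic powers under flat extension} together with faithful flatness from flat plus prime extension via \Cref{remark: prime extension implies faithfully flat}) immediately yields $\rho(IR) = \rho(I)$ and $\rho_a(IR) = \rho_a(I)$. The only step requiring care is citing a precise reference for the integrality of $(A/\pp) \otimes_\kk B$ over an algebraically closed field; the rest is formal. I do not anticipate any serious obstacle, as all the heavy lifting has already been carried out in \Cref{section: Resurgence via prime extension}.
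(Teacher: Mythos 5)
Your proposal is correct, and for the prime-extension property and the final step it follows exactly the paper's route: compute $R/\pp R \cong (A/\pp)\otimes_\kk B$, invoke the standard fact that a tensor product of finitely generated $\kk$-algebra domains over an algebraically closed field is a domain (the paper cites \cite[Lemma 1.54]{algvar} for this), and then apply \Cref{thm: resurgence via prime extension}. Where you genuinely diverge is the flatness step. The paper argues that $\phi$ is injective and then appeals to \Cref{lemma Lemma 10.39.5 from Stack}, claiming that injectivity of $\phi$ forces $I\otimes_A R \to A\otimes_A R$ to be injective for every ideal $I$ --- an implication that does not hold for a general injective ring map, so the paper's justification is weaker than yours. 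Your argument (choose a $\kk$-basis of $B$, so $A\otimes_\kk B$ is a free, hence flat, $A$-module; equivalently, flatness is preserved under base change from $\kk$ to $A$) is the standard one, is fully rigorous, and in fact repairs the soft spot in the published proof. The only housekeeping point is the one you already flagged: make sure the reference for integrality of $(A/\pp)\otimes_\kk B$ is precise, since that is the one place where the hypothesis that $\kk$ is algebraically closed (and that $A$, $B$ are finitely generated domains) is actually used.
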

	
	\begin{proof}
	Recall that the canonical ring homomorphism is defined by
	$$
			\phi : A \to A \otimes_\kk B \text{ defined by } a \rightarrow a \otimes_\kk 1. 
		$$
		
		Observe that if we can show $\phi$ is injective, then $I \rightarrow A \otimes_A R$ is injective for every ideal $I \subset A$. Hence by Lemma \ref{lemma Lemma 10.39.5 from Stack},  $\phi$ is {flat}. Injectivity of $\phi$ is obvious since both $A$ and $B$ are  fintely generated algebras over an algebraically closed field $\kk$, and {integral domains}. If there exists a nonzero element in $\kk$ annihilating some elements in $A$ (or in $B$), then this element has to be zero in $A$ (or in $B$), which is a contradiction.

		Let $\mathfrak{p} \subset A$ be a prime ideal, we need to prove that $\pp R$ is also prime. Since $\phi$ is injective then $\mathfrak{p}R = \mathfrak{p} \otimes_\kk B$. Observe that $ R / \mathfrak{p}R = \left(A / \mathfrak{p}\right) \otimes_\kk B$ because $R$ is an {integral domain}.  
		
  Now $\left(A / \mathfrak{p}\right) \otimes_k B$ is an integral domain	by \cite[Lemma 1.54]{algvar}. 
	 It is a well-known fact that an ideal is prime if only if the quotient ring is an integral domain, so $\mathfrak{p}R$ is prime, and thus, $\phi$ has {prime extension} property. The last equality follows from  Corollary \ref{corollary: resurgence via prime extension}. 
\end{proof}	

The next lemma also extends  \cite[Lemma 2.4]{fiberprojective}. The proof of \cite[Lemma 2.4]{fiberprojective} depends on divisibility in the polynomial ring. While working with general domains, we treated them as vector spaces and used properties of basis. Let us recall the following definition from \cite{guardo2013asymptotic}.  
\begin{definition}
    Let $I$ be an ideal in a regular ring. Then $\lim_{t\to \infty} \sup \rho(I,t) $ is defined as follows: $$\rho(I,t)=\sup \left\{\dfrac{m}{r}~|~I^{(m)} \not\subseteq I^r, m \geqslant t, r \geqslant t\right\}, \text{ and }\rho^{\lim\sup}_a(I)=\lim_{t\to \infty} \sup \rho(I,t). $$
\end{definition}

	\begin{lemma} \label{lemma: a-resurgence of  I +J > a -sup resurgence}
		Let $A$ and $B$ be finitely generated $\kk$-algebras, and let $I \subset A$ and $J \subset B$ be nonzero proper homogeneous ideals. If  $I + J$ be the sum of ideal extensions of $I$ and $J$ in $R = A \otimes_\kk B$ then
$$
	\rho_a^{\lim \sup}(I + J) \geqslant {\max}  \left\{ \rho_a^{{\lim \sup}}(I), \, \rho_a^{{\lim \sup}}(J) \right\}. 
$$
	\end{lemma}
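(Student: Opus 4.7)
My plan is to lift any non-containment $I^{(m)} \not\subseteq I^r$ in $A$ to a non-containment $(I+J)^{(m)} \not\subseteq (I+J)^r$ in $R = A \otimes_\kk B$, with the same value of $t$; do the symmetric thing for $J$; then take the maximum. Concretely, given $f \in I^{(m)} \setminus I^r$, the goal is to verify that $f \otimes 1 \in (I+J)^{(m)} \setminus (I+J)^r$.

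For the membership in $(I+J)^{(m)}$, the canonical map $\phi : A \to R$ is flat and has the prime extension property by \Cref{lemma: flat and prime under tensor product}, so \Cref{theorem: symbolic powers under flat extension} gives $I^{(m)} R = (IR)^{(m)}$, hence $f \otimes 1 \in I^{(m)} R$. Invoking the binomial expansion of symbolic powers in $R$ (established for finitely generated $\kk$-algebra domains; see e.g.\ \cite{ha2023binomial}) yields
\[
(I+J)^{(m)} \;=\; \sum_{i+j=m} I^{(i)} R \cdot J^{(j)} R ,
\]
whose $(i,j) = (m,0)$ summand is exactly $I^{(m)} R$, so $f \otimes 1 \in (I+J)^{(m)}$. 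For the non-membership in $(I+J)^r$, since $\kk$ is algebraically closed and $J \subsetneq B$, the Nullstellensatz supplies a maximal ideal $\mm \supseteq J$ of $B$ with $B/\mm \cong \kk$, giving a $\kk$-algebra homomorphism $\pi : B \to \kk$ killing $J$. It induces
\[
\Pi \colon R = A \otimes_\kk B \longrightarrow A \otimes_\kk \kk \cong A, \qquad a \otimes b \longmapsto \pi(b)\,a.
\]
Under $\Pi$ we have $\Pi(JR) = 0$ and $\Pi(IR) = I$, so $\Pi\bigl((I+J)^r\bigr) \subseteq I^r$. If $f \otimes 1$ lay in $(I+J)^r$, applying $\Pi$ would force $f \in I^r$, contradicting the choice of $f$.

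Running the same argument with $(mt, rt)$ in place of $(m, r)$ shows that every $t$ witnessing $I^{(mt)} \not\subseteq I^{rt}$ also witnesses $(I+J)^{(mt)} \not\subseteq (I+J)^{rt}$, and analogously for $J$; this is exactly what is needed to push the $\lim \sup$ version of the asymptotic resurgence from $I$ (and from $J$) up to $I+J$, yielding the claimed inequality. The main technical obstacle is establishing $I^{(m)} R \subseteq (I+J)^{(m)}$: this is not automatic from the ideal containment $IR \subseteq I+J$, since symbolic powers need not respect arbitrary containments, and the cleanest route around it is through the binomial expansion of symbolic powers in the tensor product. The non-membership half, by contrast, is a routine application of the Nullstellensatz-induced evaluation $\Pi$.
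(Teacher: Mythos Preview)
Your proof is correct and follows the same contrapositive skeleton as the paper's: show that any non\-containment $I^{(h)}\not\subseteq I^r$ in $A$ lifts to $(I+J)^{(h)}\not\subseteq (I+J)^r$ in $R$, and pass to the $\limsup$. Both arguments obtain the membership $f\otimes 1\in (I+J)^{(h)}$ via the binomial expansion of symbolic powers. The genuine difference lies in the non\-membership step. The paper fixes $\kk$-bases $V^*\supset V$ of $A\supset I$ and $W^*\supset W$ of $B\supset J$ with $1\in W^*\setminus W$, uses the decomposition $(I+J)^r=\sum_{j}(IR)^j\cap(JR)^{r-j}$, and argues that no basis vector $v_i\otimes 1$ can land in $A\otimes J$, forcing $f\otimes 1\in (IR)^r$ and hence $f\in I^r$. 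You instead produce a $\kk$-algebra retraction $\Pi:R\to A$ killing $JR$, so that $\Pi\big((I+J)^r\big)\subseteq I^r$ directly, with no decomposition or basis bookkeeping.

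Your route is conceptually cleaner, but note that it imports extra hypotheses not present in the lemma as stated: the Nullstellensatz needs $\kk$ algebraically closed, and your appeal to \Cref{lemma: flat and prime under tensor product} for the prime extension property further needs $A,B$ to be domains. These assumptions are ambient in the section, so nothing is lost for the applications, but the paper's basis argument goes through over any field and without the domain hypothesis. If you want to match that generality while keeping your approach, replace the Nullstellensatz by the graded structure: since $J$ is a proper homogeneous ideal one has $J\subseteq B_+$, and the quotient $\pi:B\twoheadrightarrow B/B_+\cong\kk$ already gives the required $\kk$-algebra map with $\pi(J)=0$.
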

	\begin{proof}
		Take $\theta \in \QQ$ such that $\theta>\rho_a^{\lim \sup}(I + J) $.  Then there is a $s_0\in \NN$ such that for all $t\geqslant s_0, \theta >\rho(I+J,t)$, where $\rho(I+J,t)=\sup \left\{\dfrac{m}{r} ~|~I^{(m)}\not\subset I^r, m \geqslant t, r \geqslant t \right\}$.  Thus for any $h,r \in \NN$ so that $h,r\geqslant t\geqslant s_0$ and $\dfrac{h}{r}\geqslant \theta$, one has $(I+J)^r \supset (I+J)^{(h)} \supset I^{(h)} $. 
		Up to this  point  the proof idea is same as \cite[Lemma 2.4]{fiberprojective}. Now we need to show  that $I^{(h)} \subseteq I^r$.
		
      Let $V$ be a $\kk$ basis of $I$, which extends to $V^*$ a $\kk$-basis of $A$ containing $1$. Let $W$ be a $\kk$-basis of $J$, which extends to $W^*$, a $\kk$-basis of $B$ containing $1$. Then $V\otimes_{\kk} W^*$ is a $\kk$-basis for $I\otimes_{\kk} B$ and $V^*\otimes_{\kk} W$ is a $\kk$-basis for $V\otimes_{\kk} W^*$. Also, $V^*\otimes_{\kk} W^*$ is a $\kk$-basis of $R=A\otimes_{\kk} B$.
       Let $f\in I^{(h)}$ then $f=\sum_{i=1}^n c_iv_i $, where $c_i\in \kk\setminus \{0\}$ and $v_i\in V$.
       Now $$f\otimes 1 =\sum_{i=1}^ncv_i\otimes 1 \in (I+J)^{(h)}\subset (I+J)^r=\sum_{j=0}^r (IR)^i(JR)^{r-j}= \sum_{j=0}^r \left((IR)^j\cap(JR)^{r-j}\right).$$ The last equality follows from \cite[Lemma 3.1]{ha2020symbolic}. 
		Let $f \in I^{(h)} \subseteq A$ be an arbitrary element, and consider $f \otimes 1 \in (I + J)^{(h)} \subseteq (I + J)^r = \sum_{i=0}^{r} (IR)^i (JR)^{r-i}$. Since $1\notin W  $ then $v\otimes 1 \notin A\otimes J$. Thus none of the $c_iv_i\otimes1$ appearing in the expression of $f\otimes 1$ will be in $ (IR)^j\cap(JR)^{r-j} $, where $1\leqslant j \leqslant r-1$. Hence $f\otimes1 = \sum_{i=1}^ncv_i\otimes 1 \in (IR)^h$. 
		Now $(IR)^h \cong I^h \otimes B$ and thus $f\otimes 1 \in (IR)^h $ implies that $f\in I^h$. This completes the proof. 
\end{proof}

The following generalizes \cite[Lemma 2.5]{fiberprojective} to finitely generated algebra domains.  The proof of  \cite[Lemma 2.5]{fiberprojective} depends on the binomial summation formula (\cite[Theorem 3.4]{bionomial}) and numerical inequalities, which holds in Noetherian algebras over a field as well, so we omit the proof. 

\begin{lemma}\label{lemma: a-resurgence of I+J < a sup resurgence}
Let $A$ and $B$ be finitely generated algebras over an algebraically closed field $\kk$ which are integral domains, and let $I \subset A$ and $J \subset B$ be nonzero proper homogeneous ideals.  If $I + J$  denotes $IR+JR$, where $R=A\otimes_\kk B$, then   $\rho_a(I+J)\leqslant \max\left\{\rho_a^{\lim \sup }(I ), \rho_a^{\lim \sup }(J) \right\} $. 
\end{lemma}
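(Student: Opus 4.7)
The plan is to reduce the problem to a binomial-type decomposition of symbolic powers of $I+J$ and then apply the defining property of $\rho_a^{\lim \sup}$ termwise. By \Cref{lemma: flat and prime under tensor product}, the canonical maps $A\hookrightarrow R$ and $B\hookrightarrow R$ are flat prime extensions, so \Cref{theorem: symbolic powers under flat extension} gives $(IR)^{(n)}=I^{(n)}R$ and $(JR)^{(n)}=J^{(n)}R$ for every $n\geqslant 1$. Combining this with \cite[Lemma 3.1]{ha2020symbolic} in the tensor-product setting yields the identity
\[
(I+J)^{(n)} \;=\; \sum_{i+j=n} I^{(i)}\,J^{(j)}
\]
inside $R$, which is the key input used in \cite[Lemma 2.5]{fiberprojective} over a polynomial ring and which now carries over to our setting.

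Next, I would fix a rational $\theta>\max\{\rho_a^{\lim \sup}(I),\,\rho_a^{\lim \sup}(J)\}$. By the definition of $\rho_a^{\lim \sup}$, there is an integer $t_0$ so that $I^{(p)}\subseteq I^q$ and $J^{(p)}\subseteq J^q$ whenever $p,q\geqslant t_0$ and $p/q\geqslant \theta$. To conclude $\rho_a(I+J)\leqslant \theta$, take arbitrary $m,r$ with $m/r>\theta$ and $t\gg 0$, and use the binomial identity to reduce showing $(I+J)^{(mt)}\subseteq (I+J)^{rt}$ to verifying that each summand $I^{(i)}J^{(j)}$ with $i+j=mt$ lies in $(I+J)^{rt}=\sum_{a+b=rt}I^aJ^b$. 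For pairs $(i,j)$ in which both coordinates are of order $t$, I would set $a=\lfloor ir/m\rfloor$ and $b=rt-a$; then $a+b=rt$, $i/a\geqslant m/r>\theta$, and $j/b$ differs from $m/r$ by $O(1/j)$, an error absorbed by the strict gap between $\theta$ and the two $\lim \sup$ values once $t$ is large. Thus $a,b\geqslant t_0$ and the containments $I^{(i)}\subseteq I^a$, $J^{(j)}\subseteq J^b$ combine to give $I^{(i)}J^{(j)}\subseteq I^aJ^b\subseteq (I+J)^{rt}$.

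The main obstacle is handling the boundary summands where $i$ or $j$ is too small for the partition $a=\lfloor ir/m\rfloor$, $b=rt-a$ to respect the $t_0$-threshold. I would treat these separately: if, say, $i<t_0m/r$, I discard the $I$-factor and set $a=0$, $b=rt$, noting that $j\geqslant mt-t_0m/r$, so $j/rt\to m/r>\theta$ as $t\to\infty$; the $\lim \sup$ property for $J$ then forces $J^{(j)}\subseteq J^{rt}$ for $t$ large, and consequently $I^{(i)}J^{(j)}\subseteq J^{rt}\subseteq (I+J)^{rt}$. A symmetric argument handles small $j$. All of this is exactly the numerical bookkeeping carried out in \cite[Lemma 2.5]{fiberprojective}, and since every algebraic ingredient (symbolic-power extension, binomial expansion) has been secured in our setting by \Cref{theorem: symbolic powers under flat extension} and \Cref{lemma: flat and prime under tensor product}, the remaining argument transports verbatim.
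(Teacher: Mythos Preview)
Your proposal is correct and follows essentially the same route as the paper: the paper's own proof simply asserts that the argument of \cite[Lemma 2.5]{fiberprojective} rests on the binomial summation formula for symbolic powers together with purely numerical inequalities, both of which carry over to the present setting, and then omits the details. You have supplied exactly those details.

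One small remark: the reference you give for the binomial expansion of symbolic powers, \cite[Lemma 3.1]{ha2020symbolic}, is the one the paper invokes only for the ordinary-power identity $I^aJ^b=(IR)^a\cap(JR)^b$; the symbolic binomial expansion $(I+J)^{(n)}=\sum_{i+j=n}I^{(i)}J^{(j)}$ in Noetherian $\kk$-algebras is what the paper later attributes to \cite[Theorem 4.1]{ha2023binomial}. Your derivation of $(IR)^{(n)}=I^{(n)}R$ via \Cref{lemma: flat and prime under tensor product} and \Cref{theorem: symbolic powers under flat extension} is a legitimate alternative route to the same input, but you should point to the correct source for the full binomial identity.
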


Now we give the formula for the asymptotic resurgence of the sum of ideals in finitely generated $\kk$-algebra domians where $\kk$ is an algebraically closed field. 

\begin{theorem}\label{theorem: asymptotic resurgence of sum of ideals}
		Let $A$ and $B$ be finitely generated algebras over an algebraically closed field $\kk$ which are integral domains, and let $I \subset A$ and $J \subset B$ be nonzero proper homogeneous ideals.  If $I + J$  denotes $IR+JR$, where $R=A\otimes_\kk B$, then  
		$$	\rho_a(I + J) = \overline{\rho}_{\alpha}(I + J) = \overline{\rho}(I + J) = {\max}\{ \rho_a(I),  \rho_a(J) \}=\max\{\rho_a(IR),  \rho_a(JR)  \}.$$

	\end{theorem}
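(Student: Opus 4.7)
The plan is to mirror the polynomial-ring argument of \cite[Theorem 2.6]{fiberprojective}, using the lemmas already established in this section as drop-in replacements for their polynomial counterparts. The last equality $\max\{\rho_a(I),\rho_a(J)\}=\max\{\rho_a(IR),\rho_a(JR)\}$ is a direct consequence of \Cref{lemma: flat and prime under tensor product} applied to the canonical flat prime extensions $A\to R$ and $B\to R$, so it remains to pin down
$$\rho_a(I+J)=\overline{\rho}_a(I+J)=\overline{\rho}(I+J)=\max\{\rho_a(IR),\rho_a(JR)\}.$$

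The chain $\rho_a(I+J)\leqslant\overline{\rho}_a(I+J)\leqslant\overline{\rho}(I+J)$ is immediate from $I^r\subset\overline{I^r}$. For the lower bound, I would apply \Cref{lemma: a-resurgence of  I +J > a -sup resurgence} to obtain $\rho_a^{\lim\sup}(I+J)\geqslant\max\{\rho_a^{\lim\sup}(IR),\rho_a^{\lim\sup}(JR)\}$, and then invoke the standard identity $\rho_a(X)=\rho_a^{\lim\sup}(X)$ for a homogeneous ideal $X$ in a graded Noetherian domain (obtained by concatenating non-containments so that the supremum over $t\gg 0$ can be realized as a limit superior). This gives $\rho_a(I+J)\geqslant\max\{\rho_a(IR),\rho_a(JR)\}$, which via \Cref{lemma: flat and prime under tensor product} matches $\max\{\rho_a(I),\rho_a(J)\}$.

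For the matching upper bound, I would first apply \Cref{lemma: a-resurgence of I+J < a sup resurgence} to get $\rho_a(I+J)\leqslant\max\{\rho_a^{\lim\sup}(I),\rho_a^{\lim\sup}(J)\}$, and then verify that the same proof carries over to integral closures: the binomial decomposition $(I+J)^{(m)}=\sum_{i+j=m}I^{(i)}J^{(j)}$ remains valid in our setting (by \cite[Lemma 3.1]{ha2020symbolic} together with the extension identity $I^{(i)}R=(IR)^{(i)}$ and $J^{(j)}R=(JR)^{(j)}$ coming from \Cref{theorem: symbolic powers under flat extension}), and replacing $(I+J)^{rt}$ by $\overline{(I+J)^{rt}}$ throughout the Waldschmidt-style counting preserves the relevant numerical inequality. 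This produces $\overline{\rho}(I+J)\leqslant\max\{\rho_a^{\lim\sup}(I),\rho_a^{\lim\sup}(J)\}$, and the two bounds together squeeze all four quantities to a common value.

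The main obstacle is reconstructing the integral-closure version of \Cref{lemma: a-resurgence of I+J < a sup resurgence}, since its proof was suppressed in the excerpt. One must check that non-containment of $(I+J)^{(mt)}$ in $\overline{(I+J)^{rt}}$ still descends to non-containment of a single summand $I^{(i_0t)}J^{((m-i_0)t)}$ in the appropriate product, using the basis-extension/tensor-decomposition technique from \Cref{lemma: a-resurgence of  I +J > a -sup resurgence} (where $V^*\otimes W^*$ is a $\kk$-basis of $R$) in place of polynomial divisibility. Once this carries through, assembling the inequalities above yields the claimed chain of equalities.
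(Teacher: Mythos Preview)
Your overall plan---combine \Cref{lemma: flat and prime under tensor product}, \Cref{lemma: a-resurgence of  I +J > a -sup resurgence}, and \Cref{lemma: a-resurgence of I+J < a sup resurgence} exactly as in \cite[Theorem 2.6]{fiberprojective}---is precisely the paper's approach. However, there is a genuine error in your argument that breaks the squeeze.

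You write that the chain $\rho_a(I+J)\leqslant\overline{\rho}_a(I+J)\leqslant\overline{\rho}(I+J)$ is ``immediate from $I^r\subset\overline{I^r}$''. The implication runs the wrong way: since $I^r\subset\overline{I^r}$, non-containment $I^{(m)}\not\subset\overline{I^r}$ forces $I^{(m)}\not\subset I^r$, so the set defining $\overline{\rho}$ is \emph{contained} in the set defining $\rho$. Hence $\overline{\rho}(I+J)\leqslant\rho(I+J)$ and $\overline{\rho}_a(I+J)\leqslant\rho_a(I+J)$, not the reverse. With the inequalities flipped, your proposed sandwich no longer closes: you would have a lower bound on $\rho_a(I+J)$ and an upper bound on $\overline{\rho}(I+J)$, but no inequality linking them in the direction you need.

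The repair is not to redo \Cref{lemma: a-resurgence of I+J < a sup resurgence} for integral closures---that extra work is unnecessary and does not address the direction problem. Rather, the equalities $\rho_a=\overline{\rho}_a=\overline{\rho}$ hold \emph{a priori} for homogeneous ideals in this setting by the result of DiPasquale--Francisco--Mermin--Schweig (this is what \cite[Theorem 2.6]{fiberprojective} invokes). Once you cite that, the first three quantities collapse, and \Cref{lemma: a-resurgence of  I +J > a -sup resurgence} together with \Cref{lemma: a-resurgence of I+J < a sup resurgence} (plus the identification $\rho_a=\rho_a^{\lim\sup}$) pins the common value at $\max\{\rho_a(I),\rho_a(J)\}$; \Cref{lemma: flat and prime under tensor product} then gives the last equality.
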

\begin{proof}
	The proof  has the same spirit as  \cite[Theorem 2.6]{fiberprojective} and it follows from  Lemma \ref{lemma: flat and prime under tensor product}, Lemma \ref{lemma: a-resurgence of  I +J > a -sup resurgence} and Lemma \ref{lemma: a-resurgence of I+J < a sup resurgence}. 
\end{proof}	
 
	\begin{theorem} \label{theorem: resurgence bound of sum of ideals} 
Let $A$ and $B$ be finitely generated algebras over an algebraically closed field $\kk$ which are integral domains, and let $I \subset A$ and $J \subset B$ be nonzero proper homogeneous ideals. Set $I+J=IR+JR$, where $R=A\otimes_{\kk} B$. 
Then $$
\max \{ \rho(I), \, \rho(J) \} \leqslant \rho(I + J) \leqslant \max \left\{ \rho(I), \, \rho(J), \, \frac{2 (\rho(I) + \rho(J))}{3} \right\}.
$$

Furthermore, if $\max \{ \rho(I), \, \rho(J) \} \geqslant 2 \min \{ \rho(I), \, \rho(J) \}$, then $\rho(I + J) = \max \{ \rho(I), \, \rho(J) \}$. 
	\end{theorem}
\begin{proof}
We give a sketch of the proof as it follows the same pattern as \cite[Theorem 2.3]{sharpbound}.	
First note that by Lemma \ref{lemma: flat and prime under tensor product}, $\rho(IR)=\rho(I)$ and $\rho(JR)=\rho(J)$. The binomial summation formula for symbolic powers hold for ideals in noetherian $\kk$-algebras (see \cite[Theorem 4.1]{ha2023binomial}. The other numerical inequalities in \cite[Theorem 2.3]{sharpbound} follows straightforward. Hence the required result follows. 
\end{proof}

\begin{remark}	
	To see why we need these conditions on $A$ and $B$ from Theorem \ref{theorem: asymptotic resurgence of sum of ideals}, first note that, if $A$ (as a field extension over $\kk$) is not finitely generated as a $\kk$-algebra, then $A \otimes_\kk A$ may not a Noetherian ring by \cite[Theorem 11]{vmos}. For an example, $\mathbb{R} \otimes_{\mathbb{Q}} \mathbb{R}$ is not a Notherian ring. Therefore, the condition being finitely generated is necessary. We also need $A$ and $B$ to be integral domains as $\kk$-algebras where $\kk$ is algebraically closed because of \cite[Lemma 1.54]{algvar}, this will make $A \otimes_\kk B$ to be an integral domain. A famous example for violating the algebraically closed condition, then we do not have integral domain, is $\mathbb{C} \otimes_{\mathbb{R}} \mathbb{C}$. Moreover, note by \cite[Theorem 6(e)]{Regulartensor}, $A\otimes_{\kk} B$ is regular if $A$ and $B$ are regular. 
\end{remark}

\bibliographystyle{alpha}
	\bibliography{references}
		
\end{document}